\documentclass[12pt]{article}    

%
\usepackage[margin=0.75in]{geometry} 
\usepackage{amsmath,amssymb,amsthm}

%
%

%
%
\newtheorem{theorem}{Theorem}[section]

\newtheorem{lemma}[theorem]{Lemma}

\theoremstyle{remark}

\newcommand{\s}{\sigma}

\newcommand{\be}{\begin{equation}}
\newcommand{\ee}{\end{equation}}

\newcommand{\bt}{\beta}

\newcommand{\vp}{{\mathsf{v}^{\prime}}}

\newcommand{\rc}{{\rm c}}
\newcommand{\bea}{\begin{eqnarray}}
\newcommand{\eea}{\end{eqnarray}}


\numberwithin{equation}{section}
\linespread{1.6} 

\begin{document}

\title{Continuous and discrete Painleve equations arising from the gap probability distribution of the finite $n$ Gaussian Unitary Ensembles.}
\author{
Man Cao \\
Faculty of Science and Technology, Department of Mathematics,\\
University of Macau, Av. Padre Tom\'as Pereira, Taipa Macau, China\\
Yang Chen\;(yangbrookchen@yahoo.co.uk)\\
Faculty of Science and Technology, Department of Mathematics,\\
University of Macau, Av. Padre Tom\'as Pereira, Taipa Macau, China\\
James Griffin \; (jgriffin@aus.edu) \\
Department of Mathematics and Statistics \\
American University of Sharjah, Sharjah, UAE. PO Box 26666}


\date{\today}

\maketitle

\begin{abstract}
In this paper we study the gap probability problem in the Gaussian Unitary Ensembles of $n$ by $n$ matrices : 
The probability that the interval $J := (-a,a)$ is free of eigenvalues. In the works of Tracy and Widom, Adler and
 Van Moerbeke and Forrester and Witte on this subject, it has been shown that two Painleve type differential equations arise in this context. 
 The first is the Jimbo-Miwa-Okomoto $\sigma-$form and the second is a particular Painleve IV. Using the ladder operator technique of 
 orthogonal polynomials we derive three quantities associated with the gap probability, denoted by $\sigma_n(a)$, $R_n(a)$ and $r_n(a)$, 
 and show that each one satisfying a second order, non-linear, differential equation as well as a second order, non-linear difference equation. 
 In particular, in addition 
 to providing an elementary derivation of the aforementioned $\sigma-$form and Painleve IV we show that the quantity $r_n(a)$
  satisfies a particular case of Chazy's second degree second order differential equation. 
  For the discrete equations we show that the quantity $r_n(a)$ satisfies a particular form of the modified discrete Painleve II equation 
  obtained by Grammaticos and Ramani in the context of Backlund transformations. We also derive second order second degree difference equations
   for the quantities $R_n(a)$ and $\sigma_n(a)$.


\end{abstract}

\section{Introduction}

In their 1980 classic \cite{Jimbo-Miwa-Mori-Sato}, Jimbo, Miwa, Mori and Sato, studied the problem of 
impenetrable bosons in one dimension and showed that
in the thermodynamic limit, where the density is uniform, the probability of observing a gap (free of liquid) in the ground state may be
described by an allied quantity intimately related to the $\sigma$ function of a particular Painleve V ($P_{V}$). In the context of finite $n$ random
matrices, through their pioneering work, Tracy and Widom (1994) \cite{TracyWidom1994} , showed that the probability that the interval
$(-a,a)$ is absent of eigenvalues maybe characterized in terms of a particular $P_{IV}.$ In the context of gap probabilities, the study of finite $n$ ensembles of random matrices,
generated by Gaussian, Laguerre and Jacobi weights
for general values of $\bt-$ the so-called ``symmetry parameter", can be found in the work of Adler-Van Moerebeke \cite{Adler-Moer}.
\vskip 0.2cm
\noindent
Forrester and Witte \cite{ForresterWitte}, studied gap probabilities for finite n ensembles for the Cauchy weight and found second degree ODE's related to certain Painleve-VI transcendents.
\vskip 0.2cm
\noindent
Recently, Haine and Vanderstichelen, \cite{HaineVan},
in their study of the centerless representation of the Virasoro algebra associated with the circular unitary matrix ensembles, obtained a $P_{VI}$
without encountering a third order equation, which would require a first integral to reduce the order to two.
\vskip 0.2cm
\noindent
In this paper instead of setting up the gap probability as the determinant of $I$ minus
a Christoeffel-Darboux kernel augmented by
the characteristic function of $J=(-a,a)$, we study the Hankel determinant of order $n,$
generated by the (singularly) deformed weight
$$
w_0(x)\chi_{J^\rc}(x),\;{\rm where\;\;}  J^{\rc}=(-\infty,-a)\cup(a,\infty).
$$
Here $w_0(x)$ denotes the original or ``un-deformed" weight which in this case is taken to be ${\rm e}^{-x^2}.$
The standard theory, \cite{Mehta}, shows that the probability that $J$ is free of eigenvalues in the GUE is given by
\begin{equation} \label{dets}
{\rm Prob}(J^{\rc},n)=\frac{\det\left(\int_{\mathbb{R}}x^{i+j}{\rm e}^{-x^2}\chi_{J^{\rc}}(x)dx\right)_{0\leq i,j\leq n-1}}
{\det\left(\int_{\mathbb{R}}x^{i+j}{\rm e}^{-x^2}dx\right)_{0\leq i,j\leq n-1}}.
\end{equation}
We  write ${\rm Prob}(J^{\rc},n)$ as $\mathbb{P}(a,n).$ Again from standard theory, the determinant appearing in the denominator,
can be expressed as the product of the square of the $L^2( \mathbb{R},{\rm e}^{-x^2})$ norm of the {\it monic}
Hermite polynomials, denoted by  $h_n(0),\;\;n=0,1,2,..$. That is,
$$
\det\left(\int_{\mathbb{R}}x^{i+j}{\rm e}^{-x^2}dx\right)_{0\leq i,j\leq n-1}=
h_0(0)...h_{n-1}(0),
$$
where $$h_0(0)=\int_{\mathbb{R}}{\rm e}^{-x^2}dx$$ and
$$
h_k(0)=\frac{k!}{2^k}\:h_0(0),\;\;\;k=1,2,...
$$
\\
A similar approach has been applied to other singularly deformed weights for Unitary matrix ensembles where
$w_0$ is deformed to $w_0f,$ in \cite{ChenPruessner}, \cite{BasorChen2009}, \cite{BasorChenEhrhardt}, \cite{ChenZhang2010}, \cite{DaiZhang2010} and \cite{ChenMckay2010} for a variety of $f$.
\vskip 0.2cm
\noindent
Our results in this paper fall into two categories, that of continuous Painleve equations and that of discrete Painleve equations associated with the gap probability. In the realm of continuous Painleve equations we are able to provide an elementary derivation of the Sigma form that is satisfied by the logarithmic derivative of the probability. In our derivation the second order Sigma form comes out directly whereas in the Fredholm determinant approach in \cite{TracyWidom1994}, a third order equation results and a first integral of this equation is required in order to find the Sigma form. We are also able to obtain the $P_{IV}$ from \cite{TracyWidom1994} satisfied by a quantity allied to the gap probability. Finally for the continuous Painleve type equations we are able to show for the first time that a particular form of Chazy's second order second degree equation appears in this context.
\vskip 0.2cm
\noindent
A new feature of our work is the appearance of discrete equations in the context of gap probability. We derive second order difference equations for the Sigma quantity and also for the two allied quantities coming from the ladder operator approach. Two of these difference equations are second order second degree. The other equation is a second order first degree difference equation and is a particular case of the modified discrete Painleve II equation first discovered by Grammaticos and Ramani \cite{GrammaticosRamani} .
\vskip 0.2cm
\noindent
The paper is organized as follows. In section $2$ we apply the ladder operator approach to the Hermite weight on the set $J^{\rc}$ and define the auxiliary quantities related to the gap probability. In section $3$ we derive the second order non linear difference equations. In section $4$ we study the continuous evolution in $a$ and in section $5$ we present the derivation of the Sigma form, the $P_{IV}$ and the second order second degree equation of Chazy.

\section{Preliminaries}

Let $P_n(x)$ be monic polynomials of degree $n$ in $x$ and orthogonal,
 with respect to an even weight function  supported on $\mathbb{R},$ namely,
\begin{equation*}
\int_{-\infty}^{\infty}P_j(x)P_k(x)w(x)dx =h_j\delta_{jk}, \qquad j,k=0,1,2,...
\end{equation*}
Here,
\begin{equation*}
w(x):=w_0(x)\chi_{J^{\rc}}(x)=w_0(x)(1-\chi_{(-a,a)}),\;\;\;w_0(x):={\rm e}^{-\textsf{v}_0(x)},\;\;\textsf{v}_0(x)=x^2.
\end{equation*}
Our polynomials have the monomial expansion,
\begin{equation} \label{pn}
P_n(x)=x^n+\textsf{p}(n,a)x^{n-2}+..+P_n(0),
\end{equation}
and we shall later see that the coefficient of $x^{n-2}$ will play at important role.
\vskip 0.2cm
\noindent
From the orthogonality relation there follows the recurrence relation,
\bea
xP_n(x)=P_{n+1}(x)+\beta_n(a)P_{n-1}(x), \qquad n=1,2,...,
\eea
and we take the `initial' conditions to be  $\beta_0(a)P_{-1}(x):=0$, $P_1(x)=x$. We have indicated
the $a$ dependence of the recurrence coefficients $\bt_n$.
It is a simple consequence of the recurrence relation and orthogonality that
\begin{equation} \label{pb}
\textsf{p}(j,a)-\textsf{p}(j+1,a)=\bt_j(a).
\end{equation}
In terms of the parameter $a$, the numerator determinant from (\ref{dets}) reads,
$$
\det\left(\left(\int_{-\infty}^{-a}+\int_{a}^{\infty}\right)x^{i+j}{\rm e}^{-x^2}dx\right)_{0\leq i,j\leq n-1}
=h_0(a)h_1(a)...h_{n-1}(a).
$$
{\bf Remark} Although the polynomials also depend on $a$ we do not display this unless we have to.
\vskip 0.2cm
\noindent
\begin{lemma}
The monic orthogonal polynomials with respect to the weight $w$ on $\mathbb{R}$ satisfy the following structural relation
\begin{equation*}
P_n'(z)=\beta_n(a)\:A_n(z)\:P_{n-1}(z)-B_n(z)\:P_n(z),
\end{equation*}
where
\begin{equation}
\begin{split}
A_n(z)&=2+\frac{R_n(a)a}{z^2-a^2}\\
B_n(z)&=\frac{r_n(a)z}{z^2-a^2}
\end{split}
\end{equation}
and
\begin{equation}
R_n(a):=\frac{2w_0(a)P_n^2(a)}{h_n(a)},\;\;\;r_n(a):=\frac{2w_0(a)P_n(a)P_{n-1}(a)}{h_{n-1}(a)}.
\end{equation}
\end{lemma}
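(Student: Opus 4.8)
The plan is to run the classical ladder-operator computation for the monic orthogonal polynomials of $w = w_0\chi_{J^{\rc}}$, being careful that the characteristic factor $\chi_{J^{\rc}}$ produces genuine boundary terms at $\pm a$. Since $P_n'(z)$ has degree $n-1$ we may write $P_n'(z) = \sum_{k=0}^{n-1}c_{n,k}P_k(z)$ with $c_{n,k} = h_k^{-1}\int_{\R}P_n'(y)P_k(y)w(y)\,dy$, and I would compute each $c_{n,k}$ by splitting $\int_{\R} = \int_{-\infty}^{-a} + \int_a^{\infty}$ and integrating by parts on each piece. The infinite endpoints contribute nothing because $w_0={\rm e}^{-y^2}$ decays, but the finite endpoints contribute $\rme^{-a^2}\bigl(P_n(-a)P_k(-a) - P_n(a)P_k(a)\bigr)$; since the weight is even, $P_j(-x) = (-1)^jP_j(x)$, so this equals $\rme^{-a^2}\bigl((-1)^{n+k}-1\bigr)P_n(a)P_k(a)$, which vanishes unless $k$ has parity opposite to $n$. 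The remaining (non-boundary) part of the integration by parts equals $-\int P_nP_k'w\,dy + 2\int yP_nP_kw\,dy$; the first integral vanishes by orthogonality because $\deg P_k'\le n-2$, and the second, via the recurrence $yP_n = P_{n+1}+\bt_nP_{n-1}$, equals $2\bt_nh_{n-1}\delta_{k,n-1}$ for $k\le n-1$. Collecting terms gives
\begin{equation*}
P_n'(z) = 2\bt_nP_{n-1}(z) - 2\rme^{-a^2}P_n(a)\,S(z),\qquad S(z):=\sum_{\substack{0\le k\le n-1\\ n+k\ \mathrm{odd}}}\frac{P_k(a)P_k(z)}{h_k}.
\end{equation*}

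The next step is to evaluate the parity-restricted sum $S(z)$ in closed form. I would obtain it by adding or subtracting (according to the parity of $n$) the Christoffel--Darboux formula
\begin{equation*}
\sum_{k=0}^{n-1}\frac{P_k(x)P_k(z)}{h_k} = \frac{P_n(x)P_{n-1}(z) - P_{n-1}(x)P_n(z)}{h_{n-1}(x-z)}
\end{equation*}
evaluated at $x=a$ and at $x=-a$, using $P_n(-a) = (-1)^nP_n(a)$ and $P_{n-1}(-a) = (-1)^{n-1}P_{n-1}(a)$. In either case the result collapses, after clearing the denominator $(a-z)(a+z)$, to
\begin{equation*}
S(z) = \frac{aP_n(a)P_{n-1}(z) - zP_{n-1}(a)P_n(z)}{h_{n-1}(a^2-z^2)}.
\end{equation*}
Substituting back, separating the $P_{n-1}(z)$ and $P_n(z)$ terms, and using $h_{n-1} = h_n/\bt_n$ to rewrite $2\rme^{-a^2}P_n^2(a)a/h_{n-1} = a\bt_nR_n(a)$ and $2\rme^{-a^2}P_n(a)P_{n-1}(a)z/h_{n-1} = z\,r_n(a)$, one arrives at
\begin{equation*}
P_n'(z) = \bt_n\Bigl(2 + \frac{aR_n(a)}{z^2-a^2}\Bigr)P_{n-1}(z) - \frac{z\,r_n(a)}{z^2-a^2}\,P_n(z),
\end{equation*}
which is the claimed structural relation with $A_n$ and $B_n$ as stated.

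The one place that needs real care is the boundary bookkeeping: in the smooth case integration by parts only regenerates the term coming from $\mathsf{v}_0'$, whereas here the jump of $\chi_{J^{\rc}}$ at $\pm a$ leaves the finite surface terms $\pm\rme^{-a^2}P_n(\pm a)P_k(\pm a)$, and one must track the parity factor $(-1)^{n+k}-1$ so that exactly the opposite-parity $P_k$'s survive and recombine into a single summand with denominator $z^2-a^2$. The parity-restricted Christoffel--Darboux step is the other spot where attention is needed, but once the values at $x=\pm a$ are inserted it is a one-line algebraic reduction. Everything else --- orthogonality killing $\int P_nP_k'w$, the three-term recurrence, and the substitution $h_{n-1}=h_n/\bt_n$ --- is routine.
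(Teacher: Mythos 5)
Your proposal is correct and follows essentially the same route as the paper: expand $P_n'$ in the orthogonal basis, integrate by parts so the jump of $\chi_{J^{\rc}}$ at $\pm a$ leaves boundary terms, kill the bulk terms by orthogonality and the recurrence, and resum via Christoffel--Darboux using evenness of the weight. The only (cosmetic) difference is that you invoke parity early to reduce to a parity-restricted kernel sum, whereas the paper applies Christoffel--Darboux to the full sums at $x=\pm a$ and uses $P_n(-z)=(-1)^nP_n(z)$ only at the final step; the resulting $A_n$, $B_n$, $R_n$, $r_n$ agree.
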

\begin{proof}
We start from
\begin{equation} \label{Pdiff}
P_n'(z)=\sum_{k=0}^{n-1}C_{n,k}P_k(z),
\end{equation}
and orthogonality implies,
\bea
C_{n,k}=\frac{1}{h_k}\int_{-\infty}^{\infty}P_n'(x)P_k(x)w(x)dx,\nonumber
\eea
Through integration by parts, and bearing in mind that
$w(x)=w_0(x)\chi_{J^{\rc}}(x),\;\;w_0(x)={\rm e}^{-x^2},$
it readily follows that
\begin{align}
C_{n,k}&=-\frac{1}{h_k}\int_{-\infty}^{\infty}P_n(x)P_k(x)w'(x)dx\nonumber\\
&=-\frac{1}{h_k}\int_{-\infty}^{\infty}P_n(x)P_k(x)({w_0'(x)\chi_{J^C}(x)+w_0(x)\chi_{J^C}'(x)})dx\nonumber\\
&=\frac{2}{h_k}\int_{-\infty}^{\infty}P_n(x)xP_k(x)w(x)dx+\frac{w_0(-a)}{h_k}P_n(-a)P_k(-a)-\frac{w_0(a)}{h_k}P_n(a)P_k(a).\nonumber
\end{align}
Note that $\beta_n(a)=\frac{h_n(a)}{h_{n-1}(a)}$. If we eliminate
eliminate $xP_k(x)$ from the above equation with the recurrence relations, we find,
\bea
C_{n,k}&=&\frac{w_0(a)}{h_k(a)}P_n(-a)P_k(-a)-\frac{w_0(a)}{h_k(a)}P_n(a)P_k(a), \qquad k=0,1,2,\ldots,n-2\nonumber\\
C_{n,n-1}&=&2\bt_n(a)+\frac{w_0(a)}{h_{n-1}(a)}P_n(-a)P_{n-1}(-a)-\frac{w_0(a)}{h_{n-1}(a)}P_n(a)P_{n-1}(a).\nonumber
\eea
The Christoffel-Darboux formula, \cite{Szego1939}, the discrete analogue of the Green's identity is given below.
\bea
\sum_{l=0}^{n-1}\frac{P_l(x)P_l(y)}{h_l}=\frac{P_n(x)P_{n-1}(y)-P_n(y)P_{n-1}(x)}{h_{n-1}(x-y)},
\eea
Applying this formula to (\ref{Pdiff}), with some computations we find
\begin{align}
P_n'(z)&=\sum_{k=0}^{n-1}C_{n,k}P_k(z)\nonumber\\
&=\sum_{k=0}^{n-1}\frac{w_0(a)}{h_k}P_n(-a)P_k(-a)P_k(z)-\sum_{k=0}^{n-1}\frac{w_0(a)}{h_k}P_n(a)P_k(a)P_k(z)+2\bt_nP_{n-1}(z)\nonumber\\
&=w_0(a)P_n(-a)\sum_{k=0}^{n-1}\frac{P_k(z)P_k(-a)}{h_k}-w_0(a)P_n(a)\sum_{k=0}^{n-1}\frac{P_k(z)P_k(a)}{h_k}+2\bt_nP_{n-1}(z)\nonumber\\
&=w_0(a)P_n(-a)\frac{P_n(z)P_{n-1}(-a)-P_n(-a)P_{n-1}(z)}{h_{n-1}(z+a)}+2\bt_nP_{n-1}(z)\nonumber\\
&\qquad \qquad \qquad \qquad \qquad \qquad -w_0(a)P_n(a)\frac{P_n(z)P_{n-1}(a)-P_n(a)P_{n-1}(z)}{h_{n-1}(z-a)}\nonumber\\
&=\bt_nP_{n-1}(z)\left[2-\frac{w_0(a)P_n^2(-a)}{h_{n}(z+a)}+\frac{w_0(a)P_n^2(a)}{h_{n}(z-a)}\right]\nonumber\\
&\qquad \qquad \qquad \qquad \qquad \qquad  -\left[\frac{w_0(a)P_n(a)P_{n-1}(a)}{h_{n-1}(a)(z-a)}-\frac{w_0(a)P_n(-a)P_{n-1}(-a)}{h_{n-1}(a)(z+a)} \right]P_n(z).\nonumber
\end{align}
Since the weight is even, $P_n(-z)=(-1)^nP_n(z),$ hence,
\bea
P_n'(z)=\bt_n(a)\left[2+\frac{2P_n^2(a)w_0(a)a}{h_n(a)(z^2-a^2)}\right]P_{n-1}(z) -\left[\frac{2w_0(a)P_n(a)P_{n-1}(a)z}{h_{n-1}(a)(z^2-a^2)}\right]P_n(z).
\eea
\end{proof}
\noindent
{\bf Remark}
$$
R_0(a)=\frac{{\rm e}^{-a^2}}{\int_{a}^{\infty}{\rm e}^{-x^2}dx},\;\;r_0(a)=0.
$$
\\
The well-known supplementary conditions $(S_1),\;$ $(S_2)$ and the ``Sum Rule" $(S_2')$ satisfied by the functions $A_n$ and $B_n$ continue to hold.
These are,
$$
B_{n+1}(z)+B_n(z)=zA_n(z)-\vp_0(z),\eqno(S_1)$$
$$
1+z(B_{n+1}(z)-B_n(z))=\bt_{n+1}A_{n+1}(z)-\bt_n\;A_{n-1}(z),\eqno(S_2)
$$
and
$$
\{B_n(z)\}^2+\vp_0(z)\:B_n(z)+\sum_{j=0}^{n-1}A_j(z)=\bt_n\:A_n(z)\:A_{n-1}(z).\eqno(S_2')
$$
The three identities  $(S_1),$ $(S_2)$  and $(S_2')$ valid for $z\in\mathbb{C}\bigcup\{\infty\}$ can be found in Magnus, \cite{Magnus1995}, and were fundamental in the work appearing in \cite{ChenIsmail1997}, \cite{ChenIsmail2005} and \cite{ChenIts2009}.
\\
By equating coefficients in $(S_1)$ and $(S_2')$ we obtain equations relating the recurrence coefficients $\beta_n(a)$ with the auxiliary quantities $R_n(a)$ and $r_n(a)$. From $(S_1)$, equating the residues at the simple pole produces
\begin{equation} \label{rnR}
r_{n+1}(a)+r_n(a)=a R_n(a).
\end{equation}
Replacing $n$ by $j$ in the above equation and summing from $j=0,$ to $j=n-1,$ gives,
\bea \label{sum-sum}
a\sum_{j=0}^{n-1}R_j(a)=2\:\sum_{j=0}^{n-1}r_j(a)+r_n(a),
\eea
which expresses $\sum_{j}r_j(a)$ in terms of $\sum_{j}R_j(a).$
\vskip 0.2cm
\noindent
From $(S_2')$, we obtain,
\begin{equation*}
\begin{split}
& \frac{z^2r_n^2(a)}{(z^2-a^2)^2}+\frac{2a^2r_n(a)}{z^2-a^2}+\sum_{j=0}^{n-1}\frac{a R_j(a)}{z^2-a^2}+2r_n(a)+2n \\
&\qquad \qquad =\frac{\bt_n a^2R_n(a)R_{n-1}(a)}{(z^2-a^2)^2}+\frac{2a\bt_n R_n(a)+2a\bt_n R_{n-1}(a)}{z^2-a^2}+4\bt_n.
\end{split}
\end{equation*}
Letting $z\to\infty,$ and equating the residues of the second order pole and of the first order pole, we obtain,
\begin{equation} \label{br}
\bt_n(a)=\frac{n}{2}+\frac{r_n(a)}{2}
\end{equation}
\begin{equation} \label{rbRn}
(r_n(a))^2=\bt_n(a)\:R_n(a)\:R_{n-1}(a)
\end{equation}
\begin{equation*}
\sum_{j=0}^{n-1}R_j(a)+2ar_n(a)=2\bt_n\left[R_n(a)+R_{n-1}(a)\right],
\end{equation*}
respectively.
\\
Summing $\textsf{p}(j,a)-\textsf{p}(j+1,a)=\bt_j(a)$ from (\ref{pb}) , from $j=0$ to $j=n-1,$ gives,
\begin{equation} \label{p1}
-\textsf{p}(n,a)=\sum_{j=0}^{n-1}\bt_j=\sum_{j=0}^{n-1}\frac{r_j(a)}{2}+\frac{n(n-1)}{4}.
\end{equation}
Note that,
$$
\sum_{j=0}^{n-1}r_j(a)=\frac{a}{2}\sum_{j=0}^{n-1}R_j(a)-\frac{r_n(a)}{2},
$$
which follows from (\ref{sum-sum}) and
$$
\sum_{j=0}^{n-1}R_j(a)=-2a\:r_n(a)+(n+r_n(a))R_n(a)+\frac{2\;(r_n(a))^2}{R_n(a)}.
$$
We obtain, after easy computations, an equation, which expresses $\textsf{p}(n,a)$ in terms of $r_n(a)$ and $R_n(a),$
\begin{equation} \label{prR}
-\textsf{p}(n,a)=\frac{n(n-1)}{4}-\left(\frac{1}{4}+\frac{a^2}{2}\right)r_n(a)+\frac{a}{4}\;(n+r_n(a))R_n(a)+
\frac{a}{2}\:\frac{(r_n(a))^2}{R_n(a)},
\end{equation}
which is crucial to later development.
\\
In summary we have the equations (\ref{rnR}), (\ref{br}), (\ref{rbRn}) and (\ref{prR}) which we will use in the next section to derive the second order difference equations for the quantities $r$ and $R$. We will also derive a second order difference equation for $\sigma$, a quantity that is related to the gap probability which we define in the next section.

\section{Non-linear difference equations satisfied by $r_n(a),\;\sigma_n(a),$ and $R_n(a)$.}

The modified discrete Painleve II equation of Grammaticos and Ramani, \cite{GrammaticosRamani}, has the following form
\begin{equation} \label{mdP2}
(x_{n-1}+x_n)(x_n+x_{n+1}) = \frac{-4x_n^2+m^2}{\lambda x_n + z_n}
\end{equation}
where $m, \lambda$ are constants and $z_n$ is a linear function of $n$.
Our first observation from the formulas appearing in the previous section is that a simple combination of the equations  (\ref{rnR}), (\ref{br}) and (\ref{rbRn}) leads immediately to the following second order difference equation for $r_n(a)$
\begin{equation} \label{rd2}
a^2\:(r_n(a))^2=\frac{(n+r_n(a))}{2}\:[r_{n+1}(a)+r_n(a)][r_n(a)+r_{n-1}(a)].
\end{equation}
We make the substitution
\begin{equation*}
r_n(a) = -\frac{a^2}{2}y_n
\end{equation*}
in (\ref{rd2}) to obtain
\begin{equation} \label{mdp2-r}
\frac{-4y_n^2}{y_n-\frac{2n}{a^2}} = (y_{n+1}+y_n)(y_n+y_{n-1}).
\end{equation}
Therefore we see that the expression
\begin{equation*}
-\frac{2r_n(a)}{a^2}
\end{equation*}
satisfies (\ref{mdP2}) with $m=0$, $\lambda=1$ and $z_n = -\frac{2}{a^2}n$. Note that (\ref{rd2}) may be iterated forward in $n$ starting from the `initial' conditions
$$
r_0(a)=0,\;\;\;r_1(a)=\frac{a{\rm e}^{-a^2}}{\int_{a}^{\infty}{\rm e}^{-x^2}dx}.
$$
Next we introduce the quantity $\sigma_n(a),$ defined by,
$$
\sigma_n(a):=-\sum_{j=0}^{n-1}R_j(a).
$$
We will see in the next section that $\sigma_n(a)$ is related to the gap probability by
\begin{equation}
\sigma_n(a) =\frac{d}{da}\ln\mathbb{P}(n,a).
\end{equation}
In terms of $\sigma_n(a)$, equation (\ref{p1}) is re-written as
$$
-\textsf{p}(n,a)=-\frac{1}{4}a\:\sigma_n(a)-\frac{r_n(a)}{4}+\frac{n(n-1)}{4}.
$$
Now because
$$
\beta_n(a)=\frac{n+r_n(a)}{2}=\textsf{p}(n,a)-\textsf{p}(n+1,a)
$$
we find,
$$
r_{n+1}(a)+r_n(a)=a\:(\sigma_n(a)-\sigma_{n+1}(a)),
$$
which gives the identification,
$$
R_n(a)=\sigma_n(a)-\sigma_{n+1}(a),
$$
providing a crucial link between $R_n(a)$ and $\sigma_n(a).$
\\
In terms of  $r_n(a)$ and $\sigma_n(a)$ (\ref{rd2}) becomes,
$$
\frac{2\:(r_n(a))^2}{r_n(a)+n}=R_n(a)\:R_{n-1}(a)=(\sigma_n(a)-\sigma_{n+1}(a))(\sigma_{n-1}(a)-\sigma_n(a)),
$$
a quadratic equation in $r_n(a),$ the solutions of which are expressed in terms of
$\sigma_n(a),\;\sigma_{n\pm1}(a),$ or equivalently, $R_n(a)R_{n-1}(a)$. Thus,
\begin{equation} \label{rRq}
r_n(a)=\frac{1}{4}\left(R_n(a)R_{n-1}(a)\pm\sqrt{R_n(a)R_{n-1}(a)}\:\sqrt{8n+R_n(a)R_{n-1}(a)}\right).
\end{equation}
Equation (\ref{prR}) may be re-written in terms of $\sigma_n(a),$ $r_n(a)$ and $R_n(a)$ as
\begin{equation} \label{srR2}
-\frac{a}{4}\sigma_n(a)-\frac{r_n(a)}{4}=-\left(\frac{1}{4}+\frac{a^2}{2}\right)r_n(a)+
\frac{a}{4}(n+r_n)R_n(a)+\frac{a}{2}\:\frac{(r_n(a))^2}{R_n(a)}.
\end{equation}
Eliminating $r_n$ in the above equation in favor of $R_n(a)\:R_{n-1}(a),$
we find, after computations a {\it second order} difference equation satisfied by $\sigma_n(a),$
where the gap variable $a$ appears as a parameter,
\begin{equation} \label{sd2}
\begin{split}
& (\sigma_n - \sigma_{n+1})(\sigma_{n-1}-\sigma_n)\left(2a+\sigma_{n+1}-\sigma_{n-1}\right)(2an+\sigma_n) = 2\left[\sigma_n+n(\sigma_{n-1}-\sigma_{n+1})\right]^2.
\end{split}
\end{equation}
Finally, we come to the second order difference equation satisfied by $R_n(a).$
Upon substituting (\ref{rRq}) into (\ref{rnR}), the desired difference equation for $R_n$ follows:
\begin{equation} \label{Rd2}
\begin{split}
& R_{n-1}R_{n+1}\left(R_nR_{n-1}+8n\right)\left(R_{n+1}R_n+8n+8\right) \\
& \qquad = \left[ 8R_na^2+R_nR_{n-1}R_{n+1}-4\left(aR_n+n+1\right)R_{n+1}-4\left(aR_n+n\right)R_{n-1}\right]^2
\end{split}
\end{equation}


\section{Evolution in $a.$}

In this section, we study the derivative of the quantities $h_n(a),\;$ and $\beta_n(a)$ with $a.$ We begin with
\bea
D_n(a)=h_0(a)\cdots h_{n-1}(a),\;\;\beta_n(a)=\frac{h_n(a)}{h_{n-1}(a)}.
\eea
Taking $\frac{d}{da}$ to
$$
h_n(a)=\int_{-\infty}^{\infty}\{P_n(x,a)\}^2w_0(x)\chi_{J^{\rc}}(x)dx,
$$
gives the following easily obtained expressions,
\bea
&&\frac{1}{h_n}\frac{d h_n(a)}{da}=-R_n(a),\nonumber\\
&&\frac{d}{da}\ln \bt_n(a)=\frac{1}{\bt_n}\frac{d\bt_n}{da}=R_{n-1}(a)-R_n(a),\nonumber\\
&&\frac{d}{da}\ln D_n(a)=\frac{d}{da}\log \mathbb{P}(n,a)=-\sum_{j=0}^{n-1}\:R_j(a).\nonumber
\eea
Note that the third quantity appearing above is $\sigma_n(a)$ from the previous section. Taking $\frac{d}{da}$ to
$$0=\int_{-\infty}^{\infty}P_{n}(x,a)P_{n-2}(x,a)w_0(x)\chi_{J^{\rc}}(x)dx$$
we find,
\begin{equation} \label{dpda}
\begin{split}
\frac{d\textsf{p}(n,a)}{da} &= \frac{2e^{-a^2}P_{n}(a)P_{n-2}(a)}{h_{n-2}(a)} \\
&= ar_n(a)-\bt_n R_n(a) \\
&= a\;r_n(a)-\frac{(n+r_n(a))}{2}\:R_n(a).
\end{split}
\end{equation}
The second last equality above is obtained by evaluating the recurrence relations at $x=a,$ and the
last equality follows from $\bt_n(a)=(n+r_n(a))/2.$
\\
Since $\textsf{p}(n,a)-\textsf{p}(n+1,a)=\bt_n$, we find,
\bea
\frac{d}{da}\bt_n(a)=a\left[r_n(a)-r_{n+1}(a)\right]-\bt_n R_n(a)+\bt_{n+1}R_{n+1}(a).\nonumber
\eea
Decreasing the index $n+1$ to $n$ with
\bea
&&r_{n+1}(a)=aR_n(a)-r_n(a)\nonumber\\
&&\beta_{n+1}R_{n+1}(a)=\frac{[aR_n(a)-r_n(a)]^2}{R_n(a)},\nonumber
\eea
followed by a straightforward computation gives us a Riccatti equation in $r_n(a)$.
\begin{equation} \label{drda}
\frac{d}{da}r_n(a)=\frac{2\:(r_n(a))^2}{R_n(a)}-(n+r_n(a))\:R_n(a).
\end{equation}
There is another Riccatti equation satisfied by $R_n(a)$ which maybe obtained as follows.
Taking the derivative with respect to $a$ on the equation (\ref{prR}) and adding equation (\ref{dpda}) gives 0. In this equation substitute for $r_n'(a)$ using (\ref{drda}). We find
$$
0=\{-2[\:r_n(a)]^2+(n+r_n(a))[R_n(a)]^2\}\{R'_n(a)-(R_n(a))^2+2a\:R_n(a)-4\:R_n(a)\}.
$$
This gives a further Riccatti equation, this time in $R_n(a)$
\bea
R_n'(a)=4\:r_n(a)+(R_n(a))^2-2a\:R_n(a).
\eea
\noindent
In summary, so far,we have the three important equations
\begin{equation} \label{eqnp}
-\textsf{p}(n,a)=\frac{n(n-1)}{4}-\left(\frac{1}{4}+\frac{a^2}{2}\right)r_n(a)+\frac{a}{4}\left(n+r_n(a)\right)R_n(a)+\frac{a}{2}\frac{r_n^2(a)}{R_n(a)}
\end{equation}
\begin{equation} \label{eqnrp}
\frac{dr_n(a)}{da}=\frac{2r_n^2(a)}{R_n(a)}-(r_n(a)+n) R_n(a)
\end{equation}
\begin{equation} \label{eqnpp}
\frac{d\textsf{p}(n,a)}{da}=ar_n(a)-\left[\frac{n}{2}+\frac{r_n(a)}{2}\right]R_n(a).
\end{equation}

\section{Riccatti, Chazy II and the $\sigma$ form}

To proceed further, we eliminate $r_n(a)$ from the Riccatti equation in $R_n(a)$, and find a
``dynamical" version of Painleve IV. The process is as follows. We begin with
\begin{equation} \label{Riccati}
R_n^{\prime}(a) - R_n(a)^2 +2a\:R_n(a)-4\:r_n(a) = 0.
\end{equation}
Differentiating with respect to $a$ gives
\begin{equation}
R_n^{\prime \prime}(a) - 2R_n(a)R_n'(a)+2R + 2a\:R_n^{\prime}(a)-4r_n^{\prime}(a) = 0.
\end{equation}
We now substitute for $r_n^{\prime}(a)$ using (\ref{eqnrp}) to find,
\begin{equation*}
R_n^{\prime \prime}(a) - 2R_n(a)R_n^{\prime}(a)+2R_n(a) + 2a\:R_n^{\prime}(a)-8\frac{r_n(a)^2}{R_n(a)}+4R_n(a)(n+r_n(a)) =0.
\end{equation*}
Using (\ref{Riccati}) we substitute for $r_{n}(a)$ in the above equation leading to a second order differential equation in $R_n(a)$ :
\begin{equation*}
\begin{split}
& R_n^{\prime \prime}(a) - 2R_n(a)R^{\prime}(a)+2R_n(a) + 2a\:R_n^{\prime}(a)-\frac{1}{2}
\frac{(R_n^{\prime}(a)-R_n^2(a)+2a\:R_n(a))^2}{R_n(a)} \\
& \qquad \qquad \qquad + 4n\:R_n(a)+R_n^{\prime}(a)R_n(a)-R_n^3(a)+2a\:R_n^2(a) =0.
\end{split}
\end{equation*}
This simplifies to
\begin{equation}
R_n^{\prime \prime} (a)= \frac{(R_n^{\prime}(a))^2}{2R_n(a)}+2(a^2-1-2n)R_n(a)-4a\:R_n^2(a)+\frac{3}{2}R_n^3(a).
\end{equation}
If we let
\begin{equation*}
y(a) = R_n(-a)
\end{equation*}
then
\begin{equation}
y^{\prime \prime}(a) = \frac{(y^{\prime}(a))^2}{2y(a)}+ \frac{\beta}{y(a)}+2(a^2-\alpha)y(a) +4a\:y^2(a)+\frac{3}{2}y^3(a).
\end{equation}
We see that $y(a)$ is a Painleve $IV$ with $\beta=0$ and $\alpha = 1+2n$.
\\
To obtain the Chazy equation, we proceed as follows: Solving for $R_n(a)$ in (\ref{eqnrp}), we find
$$
R_n(a)=\frac{-r_n'(a)\pm\sqrt{\Delta_n(a)}}{2(n+r_n(a))},
$$
where $\Delta_n(a):=(r'_n(a))^2+8\:r^2_n(a)(r_n(a)+a).$

\noindent
Choosing either sign and substituting the resulting $R_n$ into the Riccatti equation satisfied by $R_n(a),$ we find,
$$
 \frac{[-r_n'(a)+\sqrt{\Delta_n(a)}]\:[-8\:n\:r_n(a)-12\:r_n^2(a)+2\:a\:\sqrt{\Delta_n(a)}-r_n''(a)]}{[n+r_n(a)]\sqrt{\Delta_n(a)}}=0,
$$
resulting in an irrelevant algebraic equation and the following equation
$$
-8\:n\:r_n(a)-12\:r_n^2(a)+2\:a\:\sqrt{\Delta_n(a)}-r_n''(a)=0.
$$
Clearing the square root, followed by a translation in $n,$
$$
v_n(a):=-2r_n(a)-\frac{2n}{3},
$$
we have,
\bea
\left(v_n''(a)-6v_n^2(a)+\frac{8n^2}{3}\right)^2
=4\:a^2\left[(v_n'(a))^2-4v_n^3(a)+\frac{16n^2}{3}v_n(a)+\frac{64n^3}{27}\right],
\eea
the first member of the Chazy II system \cite{Cosgrove}.
\\
To obtain the Sigma form, we begin with equation (\ref{srR2})
$$
\frac{a}{4}\:\s_n(a)=\frac{a^2}{2}\:r_n(a)-\frac{a}{4}\:(n+r_n(a))R_n(a)-\frac{a}{2}\:\frac{(r_n(a))^2}{R_n(a)},
$$
which is simplified to
\begin{equation} \label{elim1}
(n+r_n(a))\:R_n(a)+\frac{2(r_n(a))^2}{R_n(a)}=2\:a\;r_n(a)-\s_n(a).
\end{equation}
We remind the Reader that
\bea
\s_n(a)=\frac{d}{da}\ln\mathbb{P}(n,a),
\eea
expressed in term of the derivative of the log of  the probability that the interval $(-a,a)$ is free of eigenvalues.
In terms of $\sigma_n(a)$, equation (\ref{eqnpp}) is written as
\bea
\frac{1}{4}(a\:\s_n(a))'=-\frac{r_n'(a)}{4} + ar_n(a) -\frac{1}{2}\:(n+r_n(a))\:R_n(a).
\eea
We eliminate $r_n'(a)$ with (\ref{eqnrp}) to give,
\begin{equation} \label{elim2}
\frac{1}{4}\:(a\s_n(a))'=a^2\:r_n(a) - \frac{a}{2}\:\frac{(r_n(a))^2}{R_n(a)}-\frac{a}{4}(n+r_n(a))R_n(a).
\end{equation}
In an easy computation, we eliminate $R_n(a)$ from equations (\ref{elim1}) and (\ref{elim2}) to find,
\bea
(a\s_n(a))'-\s_n(a)=2\:a\:r_n(a),
\eea
or
\bea
\s_n'(a)=2\;r_n(a).
\eea
We now have two equations, linear in $1/R_n(a)$ and $R_n(a);$ reproduced here
\bea
-(n+r_n(a))R_n(a)+\frac{2(r_n(a))^2}{R_n(a)}=r_n'(a)\nonumber\\
(n+r_n(a))R_n(a)+\frac{2(r_n(a))^2}{R_n(a)}=2\:a\:r_n(a)-\sigma_n(a).\nonumber
\eea
Solving for $1/R_n(a)$ and $R_n(a)$, gives,
\bea
\frac{4(r_n(a))^2}{R_n(a)}=2a\;r_n(a)-\sigma_n(a)+r_n'(a)\nonumber\\
2(n+r_n(a))R_n(a)=2\:a\:r_n(a)-\sigma_n(a)-r_n'(a),\nonumber
\eea
and the product of the above
\bea
8(n+r_n(a))(r_n(a))^2=(2a\;r_n(a)-\sigma_n(a))^2-(r_n'(a))^2.
\eea
Noting the fact that $r_n(a)=\sigma_n'(a)/2,$ we arrived at the Jimbo-Miwa-Okamoto $\sigma-$ form of a particular Painleve $IV;$
\bea
(\sigma_n''(a))^2=4(a\:\sigma_n'(a)-\s_n(a))^2-4(\s_n'(a))^2(\s_n'(a)+2n).
\eea

\end{document}